\newcommand{\lyxaddress}[1]{
\par {\raggedright #1
\vspace{1.4em}
\noindent\par}
}
\newtheorem{theorem}{Theorem}
\newtheorem{proposition}[theorem]{Proposition}
\newtheorem{lemma}[theorem]{Lemma}
\newtheorem*{lemma*}{Lemma}
\theoremstyle{remark}
\newtheorem{remark}[theorem]{Remark}
\newtheorem*{remark*}{Remark}
\newtheorem*{remarks*}{Remarks}
\newtheorem*{example*}{Example}
\newtheorem*{question*}{QUESTION}
\newtheorem*{conjecture*}{CONJECTURE}
\theoremstyle{definition}
\newtheorem{definition}[theorem]{Definition}
\newtheorem*{definition*}{Definition}
\newtheorem*{notation*}{Notation}
\newcommand{\Dom}{\mathop\mathrm{Dom}\nolimits}
\newcommand{\Spec}{\mathop\mathrm{spec}\nolimits}
\newcommand{\supp}{\mathop\mathrm{supp}\nolimits}
\renewcommand{\span}{\mathop\mathrm{span}\nolimits}
\newcommand{\sgn}{\mathop\mathrm{sgn}\nolimits}
\newcommand{\Tr}{\mathop\mathrm{tr}\nolimits}
\begin{document}

\title{On infinite Jacobi matrices with a trace class resolvent}

\author{Pavel \v{S}\v{t}ov\'\i\v{c}ek}
\maketitle

\lyxaddress{Department of Mathematics, Faculty of Nuclear Science, Czech Technical
University in Prague, Trojanova 13, 120~00 Praha, Czech Republic}
\begin{abstract}
\noindent Let $\{\hat{P}_{n}(x)\}$ be an orthonormal polynomial sequence
and denote by $\{w_{n}(x)\}$ the respective sequence of functions
of the second kind. Suppose the Hamburger moment problem for $\{\hat{P}_{n}(x)\}$
is determinate and denote by $J$ the corresponding Jacobi matrix
operator on $\ell^{2}$. We show that if $J$ is positive definite
and $J^{-1}$ belongs to the trace class then the series on the right-hand
side of the defining equation
\[
\mathfrak{F}(z):=1-z\sum_{n=0}^{\infty}w_{n}(0)\hat{P}_{n}(z)
\]
converges locally uniformly on $\mathbb{C}$ and it holds true that
$\mathfrak{F}(z)=\prod_{n=1}^{\infty}(1-z/\lambda_{n})$ where $\{\lambda_{n};\,n=1,2,3,\ldots\}=\Spec J$.
Furthermore, the Al-Salam\textendash Carlitz II polynomials are treated
as an example of orthogonal polynomials to which this theorem can
be applied.
\end{abstract}
\noindent \begin{flushleft}
\emph{Keywords}: infinite Jacobi matrix; orthogonal polynomials; functions
of the second kind; spectral problem; Al-Salam\textendash Carlitz
II polynomials \emph{}\\
\emph{MSC codes}: 42C05; 47B36
\par\end{flushleft}

\section{Introduction}

Denote for a while by $\mathfrak{A}_{\infty}\subset\mathbb{C}^{\infty,\infty}$
the unital algebra whose elements are semi-infinite matrices $\mathcal{A}=(\mathcal{A}_{m,n})_{m,n\geq0}$
obeying the condition
\[
\exists k=k_{\mathcal{A}}\in\mathbb{Z}_{+}\ \,\text{s.t.}\,\ \forall m,n\in\mathbb{Z}_{+},\ \mathcal{A}_{m,n}=0\ \text{if}\ n>m+k.
\]
Here $\mathbb{Z}_{+}$ stands for non-negative integers. Clearly,
if $\mathcal{A},\mathcal{B}\in\mathfrak{A}_{\infty}$ and $\mathcal{A}_{m,n}=0$
for $n>m+k$, $\mathcal{B}_{m,n}=0$ for $n>m+\ell$ then $(\mathcal{A}\mathcal{B})_{m,n}=0$
for $n>m+k+\ell$. The product $\mathcal{A}\mathcal{B}$ makes good
sense even for any $\mathcal{A}\in\mathfrak{A}_{\infty}$ and $\mathcal{B}\in\mathbb{C}^{\infty,\infty}$.

Regarding $\mathbb{C}^{\infty}$ as a linear space of semi-infinite
column vectors every $\mathcal{A}\in\mathfrak{A}_{\infty}$ becomes
naturally a linear operator on $\mathbb{C}^{\infty}$. The Hilbert
space $\mathbb{\ell}^{2}\equiv\ell^{2}(\mathbb{Z}_{+})$ may be viewed
as a linear subspace of $\mathbb{C}^{\infty}$. Denote by $\{\mathbf{e}_{n}\}$
the canonical basis in $\ell^{2}$. If every column of $\mathcal{A}\in\mathfrak{A}_{\infty}$
is square summable then $\mathcal{A}$ can be regarded as well as
an operator in $\mathbb{\ell}^{2}$ with $\Dom\mathcal{A}:=\span\{\mathbf{e}_{n}\}$.
In this case we say that $\mathbf{f}\in\mathbb{C}^{\infty}$ is a
\emph{formal eigenvector} of $\mathcal{A}$ if it is an eigenvector
of $\mathcal{A}$ in $\mathbb{C}^{\infty}$.

Furthermore, we shall extend the usual scalar product $\langle\cdot,\cdot\rangle$
on $\ell^{2}$ to $\mathbb{C}^{\infty}$ by letting $\langle\mathbf{f},\mathbf{g}\rangle:=\sum_{n=0}^{\infty}\overline{f_{n}}g_{n}$
for any two vectors $\mathbf{f},\mathbf{g}\in\mathbb{C}^{\infty}$
such that the series converges. Particularly this is the case if one
of the vectors has only finitely many non-vanishing components.

This point of view is applicable to any Jacobi (tridiagonal) matrix
which is in the sequel always supposed to be semi-infinite, symmetric,
real and non-decomposable. A Jacobi matrix $\mathcal{J}$ will be
written in the form
\[
\mathcal{J}=\left(\begin{array}{cccccc}
\beta_{0} & \alpha_{0} & 0 & 0 & 0 & \cdots\\
\alpha_{0} & \beta_{1} & \alpha_{1} & 0 & 0 & \cdots\\
0 & \alpha_{1} & \beta_{2} & \alpha_{2} & 0 & \cdots\\
0 & 0 & \alpha_{2} & \beta_{3} & \alpha_{3} & \cdots\\
0 & 0 & 0 & \alpha_{3} & \beta_{4} & \cdots\\
\vdots & \vdots & \vdots & \vdots & \vdots & \ddots
\end{array}\right)\!,
\]
Without loss of generality we can even suppose that $\alpha_{n}>0$
for all $n$.

If $\mathcal{A}\in\mathfrak{A}_{\infty}$ regarded as an operator
in $\ell^{2}$ is bounded it extends as a bounded operator unambiguously
to the whole Hilbert space and therefore, with some abuse of notation,
we can use the same symbol for a bounded operator on $\ell^{2}$ and
its matrix (expressed in the canonical basis). In particular, the
symbol $I$ will denote, at the same time, the unit operator on $\ell^{2}$
and the unit matrix in $\mathbb{C}^{\infty,\infty}$. With a general
Jacobi matrix one has to be more careful, however.

Recall that with every Jacobi matrix $\mathcal{J}$ one associates
a sequence of moments
\begin{equation}
m_{k}:=\langle\mathbf{e}_{0},\mathcal{J}^{k}\mathbf{e}_{0}\rangle,\ k\geq0.\label{eq:J_moments}
\end{equation}
The following definition may simplify some formulations.

\begin{definition} We shall say that a Jacobi matrix $\mathcal{J}$
is Hamburger determinate if the Hamburger moment problem for the sequence
of moments (\ref{eq:J_moments}) is determinate. \end{definition}

Given a Jacobi matrix $\mathcal{J}$ let us re-denote $\mathcal{J}$
as $\dot{J}$ if treated as an operator on $\ell^{2}$. Hence $\Dom\dot{J}:=\span\{\mathbf{e}_{n}\}$.
Clearly, $\dot{J}$ is symmetric. As is well known, $\mathcal{J}$
is Hamburger determinate if and only if $\dot{J}$ is essentially
self-adjoint \cite{Akhiezer}. If this happens we shall denote by
$J:=\overline{\dot{J}}$ the unique self-adjoint extension of $\dot{J}$.
Then
\begin{equation}
\Dom J=\{\mathbf{f}\in\ell^{2};\,\mathcal{J}\mathbf{f}\in\ell^{2}\}.\label{eq:DomJ}
\end{equation}
The symbol $\varrho(J)$ will stand for the resolvent set of $J$
and the spectral decomposition of $J$ will be written in the form
$J=\int\lambda\,\text{d}E_{\lambda},$ with $E$ being a projection-valued
measure. Then the probability measure on $\mathbb{R}$,
\begin{equation}
\mu(\cdot):=\langle\mathbf{e}_{0},E(\cdot)\mathbf{e}_{0}\rangle,\label{eq:mu_E}
\end{equation}
is the only solution to the Hamburger moment problem. In particular,
$\supp\mu=\Spec J$ and we have
\[
m_{k}=\langle\mathbf{e}_{0},J^{k}\mathbf{e}_{0}\rangle=\int\lambda^{k}\,\text{d}\mu(\lambda),\ k\geq0.
\]

We shall focus on the case when $\dot{J}$ is is essentially self-adjoint
and positive definite, i.e.
\[
\forall\mathbf{f}\in\span\{\mathbf{e}_{n}\},\ \langle\mathbf{f},\mathcal{J}\mathbf{f}\rangle\geq\gamma\|\mathbf{f}\|^{2}
\]
for some $\gamma>0$. Moreover, we shall assume that $J^{-1}$ is
a trace class operator. If so, denote by $\{\lambda_{n};\,n\geq1\}$
the sequence of eigenvalues of $J$ ordered increasingly,
\[
0<\gamma\leq\lambda_{1}<\lambda_{2}<\lambda_{3}<\cdots.
\]
Of course, all eigenvalues $\lambda_{m}$ are simple. Then $\mu$
is supported on the discrete set $\{\lambda_{n};\,n\geq1\}=\Spec J$.
Furthermore, the function $z\mapsto\prod_{n=1}^{\infty}(1-z/\lambda_{n})$
is entire and its zero set coincides with $\Spec J$.

An indispensable companion of $\mathcal{J}$ is an orthonormal polynomial
sequence $\{\hat{P}_{n}(x)\}$ obeying the three-term recurrence equation
\begin{equation}
\alpha_{0}\hat{P}_{1}(x)+(\beta_{0}-x)\hat{P}_{0}(x)=0,\ \alpha_{n}\hat{P}_{n+1}(x)+(\beta_{n}-x)\hat{P}_{n}(x)+\alpha_{n-1}\hat{P}_{n-1}(x)=0,\ n\geq1,\label{eq:recurr_Phat}
\end{equation}
with the initial data $\hat{P}_{0}(x)=1$. Whenever convenient we
shall treat the sequence $\{\hat{P}_{n}(x)\}$ as a column vector
$\hat{P}_{\ast}(x)\in\mathbb{C}^{\infty}$,
\[
\hat{P}_{\ast}(x)^{T}:=\big(\hat{P}_{0}(x),\hat{P}_{1}(x),\hat{P}_{2}(x),\ldots\big).
\]
An analogous notation will be used in case of other polynomial sequences
as well. The recurrence (\ref{eq:recurr_Phat}) in fact means exactly
that $\hat{P}_{\ast}(x)$ is a formal eigenvector of $\mathcal{J}$
corresponding to the eigenvalue $x$,
\begin{equation}
\mathcal{J}\hat{P}_{\ast}(x)=x\hat{P}_{\ast}(x),\label{eq:Ph_eigenv_J}
\end{equation}
with the initial component $\hat{P}_{0}(x)=1$. Recall also the following
relation which is not only easy to verify but also quite useful:
\begin{equation}
\forall n\geq0,\ \hat{P}_{n}(\mathcal{J})\mathbf{e}_{0}=\mathbf{e}_{n}.\label{eq:Phn_J_e0_en}
\end{equation}

If $\mathcal{J}$ is Hamburger determinate then $\{\hat{P}_{n}(x)\}$
is an orthonormal basis in $L^{2}(\mathbb{R},\text{d}\mu(x))$ and
from (\ref{eq:DomJ}) it is seen that $\sum_{n=0}^{\infty}|\hat{P}_{n}(z)|^{2}=\infty$
for all $z\in\mathbb{C}$ unless $z\in\mathbb{R}$ is an eigenvalue
of $J$.

Another useful tool which can be applied in case of a Hamburger determinate
Jacobi matrix $\mathcal{J}$ is that of the sequence of functions
of the second kind \cite{VanAssche}. Let us define an analytic vector-valued
function $\mathbf{w}(z)$ on $\varrho(J)$ with values in $\ell^{2}$
by the equation
\[
\forall z\in\varrho(J),\ \mathbf{w}(z):=(J-z)^{-1}\mathbf{e}_{0}=\int\frac{\hat{P}_{\ast}(\lambda)}{\lambda-z}\,\text{d}\mu(\lambda).
\]
Its components $w_{k}(z)$ are called \emph{functions of the second
kind}. Hence
\[
\forall z\in\varrho(J),\forall k\geq0,\ w_{k}(z)=\int\frac{\hat{P}_{k}(\lambda)}{\lambda-z}\,\text{d}\mu(\lambda).
\]
The first component is also called the \emph{Weyl function} of $J$,
\[
\forall z\in\varrho(J),\ w(z)\equiv w_{0}(z)=\langle\mathbf{e}_{0},(J-z)^{-1}\mathbf{e}_{0}\rangle=\int\frac{\text{d}\mu(\lambda)}{\lambda-z}\,,
\]
and it is, up to a sign, the Stieltjes transform of the measure $\mu$.
Note that, in $\mathbb{C}^{\infty}$, $(\mathcal{J}-z)\mathbf{w}(z)=\mathbf{e}_{0}$
with the initial data $\langle\mathbf{e}_{0},\mathbf{w}(z)\rangle=w(z)$.
This is in fact a recurrence relation which along with the initial
data determines the vector $\mathbf{w}(z)$ unambiguously.

We claim that if $\mathcal{J}$ is Hamburger determinate then
\[
\forall z\in\varrho(J),\forall n\geq0,\ w_{n}(z)\hat{P}_{n}(z)=\langle\mathbf{e}_{n},(J-z)^{-1}\mathbf{e}_{n}\rangle.
\]
Indeed,
\begin{eqnarray*}
w_{n}(z)\hat{P}_{n}(z) & = & \hat{P}_{n}(z)\int_{\mathbb{R}}\frac{\hat{P}_{n}(\lambda)}{\lambda-z}\,\text{d}\mu(\lambda)+\int_{\mathbb{R}}\hat{P}_{n}(\lambda)\,\frac{\hat{P}_{n}(\lambda)-\hat{P}_{n}(z)}{\lambda-z}\,\text{d}\mu(\lambda)\\
 & = & \int\frac{\hat{P}_{n}(\lambda)^{2}}{\lambda-z}\,\text{d}\mu(\lambda)\\
\noalign{\smallskip} & = & \langle\mathbf{e}_{n},(J-z)^{-1}\mathbf{e}_{n}\rangle.
\end{eqnarray*}
In the first equation we have used the orthogonality of the polynomial
sequence and that $\big(\hat{P}_{n}(\lambda)-\hat{P}_{n}(z)\big)/(\lambda-z)$
is a polynomial in $\lambda$ of degree less than $n$. The last equality
follows from (\ref{eq:mu_E}) and (\ref{eq:Phn_J_e0_en}).

Moreover, note that $\int\hat{P}_{n}(\lambda)^{2}\text{d}\mu(\lambda)=1$
and so the function $w_{n}(z)\hat{P}_{n}(z)$ is, up to a sign, the
Stieltjes transform of a probability measure and as such it is known
to have no zeros outside the convex hull of the support of that measure
\cite{Stieltjes}. Hence $w_{n}(z)\hat{P}_{n}(z)$ has no zeros outside
the convex hull of $\supp\mu$. Consequently, if $\mathcal{J}$ is
Hamburger determinate and $J$ is positive definite then $w_{n}(0)\hat{P}_{n}(0)>0$
for all $n\geq0$ and $J^{-1}$ belongs to the trace class if and
only if
\begin{equation}
\Tr J^{-1}=\sum_{n=0}^{\infty}w_{n}(0)\hat{P}_{n}(0)<\infty.\label{eq:Tr_Jinv}
\end{equation}

\begin{theorem}\label{thm:main} Let $\mathcal{J}$ be a Hamburger
determinate Jacobi matrix which is positive definite on $\span\{\mathbf{e}_{n};\,n\geq0\}$
and $J$ be the respective Jacobi matrix operator on $\ell^{2}$.
If $J^{-1}$ is a trace class operator then the series $\sum_{n=0}^{\infty}w_{n}(0)\hat{P}_{n}(z)$
converges locally uniformly in $z\in\mathbb{C}$ and
\begin{equation}
\mathfrak{F}(z):=1-z\sum_{n=0}^{\infty}w_{n}(0)\hat{P}_{n}(z)=\prod_{n=1}^{\infty}\left(1-\frac{z}{\lambda_{n}}\right)\label{eq:Fgoth_series}
\end{equation}
where $\{\lambda_{n};\,n=1,2,3,\ldots\}=\Spec J$.

Particularly this means that the zeros of the entire function $\mathfrak{F}(z)$
are simple and the zero set coincides with the spectrum of $J$. \end{theorem}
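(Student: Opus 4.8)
The plan is to identify the finite section truncations of $\mathcal{J}$ as the natural intermediary. For $N\geq1$ let $J_{N}$ be the $N\times N$ principal submatrix of $\mathcal{J}$, and recall the classical fact that $\det(xI-J_{N})=\alpha_{0}\alpha_{1}\cdots\alpha_{N-1}\,\hat{P}_{N}(x)$, so that the monic orthogonal polynomial $P_{N}(x)=\det(xI-J_{N})$ has zeros exactly at the eigenvalues of $J_{N}$. In particular $P_{N}(0)=(-1)^{N}\det J_{N}=(-1)^{N}\prod_{j}\lambda_{j}^{(N)}$, where $\lambda_{1}^{(N)}<\cdots<\lambda_{N}^{(N)}$ are the eigenvalues of $J_{N}$; since $\mathcal{J}$ is positive definite on $\span\{\mathbf e_n\}$ these are all $\geq\gamma>0$. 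The first step is therefore to express the partial sum $S_{N}(z):=1-z\sum_{n=0}^{N-1}w_{n}(0)\hat P_{n}(z)$ in closed form. Using $w_{n}(0)\hat P_{n}(0)=\langle\mathbf e_{n},J^{-1}\mathbf e_{n}\rangle>0$ and the three-term recurrence, I expect a telescoping/Wronskian-type identity to give $S_{N}(z)=c_{N}\,\hat P_{N}(z)/\hat P_{N}(0)$ up to an explicit constant, or more plausibly the cleaner statement that $\prod_{j=1}^{N}\bigl(1-z/\lambda_{j}^{(N)}\bigr)=P_{N}(z)/P_{N}(0)=\hat P_{N}(z)/\hat P_{N}(0)$ is itself closely related to $S_{N}(z)$. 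Concretely I would prove the algebraic identity
\[
1-z\sum_{n=0}^{N-1}\frac{\langle\mathbf e_{n},J^{-1}\mathbf e_{n}\rangle}{\hat P_{n}(0)}\,\hat P_{n}(z)\;=\;\frac{\hat P_{N}(z)}{\hat P_{N}(0)}\cdot\big(\text{something}\big),
\]
by induction on $N$ using \eqref{eq:recurr_Phat}; the correct bookkeeping here is the first technical point to pin down, but it is elementary linear algebra on tridiagonal matrices (Cramer's rule for $(J_{N})^{-1}_{n,n}$ in terms of $\hat P_{n}$ and the "reversed" polynomials).

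The second step is the analytic convergence. Having \eqref{eq:Tr_Jinv}, namely $\sum_{n}w_{n}(0)\hat P_{n}(0)=\Tr J^{-1}<\infty$ with all terms positive, I want locally uniform convergence of $\sum_{n}w_{n}(0)\hat P_{n}(z)$ on $\mathbb C$. The key estimate is a bound of the form $|\hat P_{n}(z)|\leq\hat P_{n}(0)\cdot g(|z|)^{n}$ or, better, $|\hat P_{n}(z)|^{2}\leq\hat P_{n}(0)^{2}\,h_{n}(|z|)$ with $\sum_{n}\hat P_{n}(0)^{2}h_{n}(R)<\infty$ for every $R$. One route: since $J^{-1}$ is trace class and positive, $\hat P_{n}(0)^{2}=\langle\mathbf e_{n},J^{-1}\mathbf e_{n}\rangle/\langle\mathbf e_{0},J^{-1}\mathbf e_{0}\rangle$-type ratios decay, and one can control $\hat P_{n}(z)$ via the resolvent: $\hat P_{n}(z)=$ a polynomial whose coefficients are dominated using the moment bounds $m_{k}\leq\|J^{-1}\|^{-k}$? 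That is not automatic. A cleaner approach is to use the finite-section identity from Step 1 together with interlacing: the eigenvalues $\lambda_{j}^{(N)}$ interlace and increase to (a subset indexing) $\lambda_{j}$, and the Hadamard-type product $\prod_{j=1}^{N}(1-z/\lambda_{j}^{(N)})$ converges locally uniformly to $\prod_{j=1}^{\infty}(1-z/\lambda_{j})$ precisely because $\sum_{j}1/\lambda_{j}=\Tr J^{-1}<\infty$. Thus if Step 1 shows $S_{N}(z)$ equals that finite product (or differs from it by a factor tending to $1$ locally uniformly — this is where the $\hat P_{N}(z)/\hat P_{N}(0)$ normalization and the determinacy of $\mathcal J$ enter), then $S_{N}(z)\to\prod_{j=1}^{\infty}(1-z/\lambda_{j})$ locally uniformly, which gives simultaneously the convergence of the series and the product formula \eqref{eq:Fgoth_series}.

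The third step is to justify passing from $S_{N}$ to the limit: one must show $\hat P_{N}(z)/\hat P_{N}(0)\to 1$ — equivalently, that the "correction factor" in the Step-1 identity is negligible. For fixed $z\neq\lambda_{j}$ this should follow from $\hat P_{N}(z)=\hat P_{N}(0)\prod_{j=1}^{N}\frac{\lambda_{j}^{(N)}-z}{\lambda_{j}^{(N)}}$ and the fact that, again by $\sum 1/\lambda_{j}<\infty$ and interlacing, the infinite product over $j$ converges; but we also need to rule out that $\hat P_{N}(z)/\hat P_{N}(0)$ runs off to something other than the product — here Hamburger determinacy is essential, because it forces $\sum_{n}|\hat P_{n}(z)|^{2}=\infty$ for $z\notin\Spec J$, pinning down the growth. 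Finally, once \eqref{eq:Fgoth_series} is established the last assertion is immediate: $\mathfrak F(z)=\prod_{n\geq1}(1-z/\lambda_{n})$ with the $\lambda_{n}$ distinct, so the zeros are simple and the zero set is exactly $\Spec J$. The main obstacle I anticipate is Step 1 — getting the exact finite-section identity for $S_{N}(z)$, including the precise form of the correction term — together with the clean proof in Step 3 that this correction tends to $1$ locally uniformly on $\mathbb C$; the convergence of the Hadamard product given $\Tr J^{-1}<\infty$ is routine, but marrying it to the polynomial partial sums via determinacy is the delicate part.
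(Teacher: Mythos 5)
Your plan hinges on a Step-1 identity that you never establish, and in the clean form you hope for it does not hold. The correct closed form for the partial sums follows from the discrete Green/Wronskian identity applied to the two solutions $\mathbf{w}(0)=J^{-1}\mathbf{e}_{0}$ and $\hat{P}_{\ast}(z)$ of the three-term recurrence:
\begin{equation*}
1-z\sum_{n=0}^{N-1}w_{n}(0)\hat{P}_{n}(z)\;=\;\alpha_{N-1}\bigl(w_{N}(0)\hat{P}_{N-1}(z)-w_{N-1}(0)\hat{P}_{N}(z)\bigr),
\end{equation*}
a combination of \emph{two} consecutive polynomials whose coefficients $w_{N-1}(0),w_{N}(0)$ involve the whole infinite matrix; it is not a constant multiple of $\hat{P}_{N}(z)/\hat{P}_{N}(0)$, and its zeros are in general not the finite-section eigenvalues $x_{N,j}$. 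So the partial sums are not the finite-section characteristic polynomials, and showing that they nevertheless converge locally uniformly (which is itself one of the assertions of the theorem) requires quantitative bounds on $w_{n}(0)\hat{P}_{n}(z)$ for complex $z$ that your proposal does not supply. Step 3 does not close this gap: the assertion that $\hat{P}_{N}(z)/\hat{P}_{N}(0)\to 1$ is false (the limit is $\mathfrak{F}(z)$ itself); the appeal to determinacy through $\sum_{n}|\hat{P}_{n}(z)|^{2}=\infty$ ``pinning down the growth'' is not an argument; and the roots $x_{N,j}$ \emph{decrease} (not increase) to $\lambda_{j}$ -- a direction you actually need, since it is $1/x_{N,j}\leq 1/\lambda_{j}$ that gives the uniform bound $\sum_{j\leq N}1/x_{N,j}\leq\Tr J^{-1}$ from which locally uniform convergence of $\prod_{j\leq N}(1-z/x_{N,j})$ to $\prod_{j}(1-z/\lambda_{j})$ could be extracted.

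For comparison, the paper obtains exactly the missing quantitative input from the Green matrix $\mathcal{G}$ of $\mathcal{J}$ (Theorem~\ref{thm:L}: $\hat{P}_{\ast}(x)=(I-x\mathcal{G})^{-1}\hat{P}_{\ast}(0)$), rescaled to a matrix $K$ satisfying $|K_{m,n}|\leq\sqrt{\kappa_{m}\kappa_{n}}$ with $\kappa_{n}=w_{n}(0)\hat{P}_{n}(0)\in\ell^{1}$; this rests on the monotonicity of $w_{n}(0)/\hat{P}_{n}(0)$ and yields factorial bounds on $\|K^{j}\|$, hence entire resolvent estimates, the locally uniform convergence of $\sum_{n}w_{n}(0)\hat{P}_{n}(z)$, and the identification $\hat{P}_{n}(z)/\hat{P}_{n}(0)\to\mathfrak{F}(z)$. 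The zero set is then matched with $\Spec J$ via the argument principle applied to $\hat{P}_{n}'/\hat{P}_{n}$ (using $x_{n,j}\downarrow\lambda_{j}$), and the product formula comes from Hadamard factorization of an order-one function together with $\mathfrak{F}'(0)=-\Tr J^{-1}$, which eliminates the exponential factor. Your finite-section route could legitimately replace that last Hadamard step, but the convergence of the series and its link to the limit of $\hat{P}_{N}(z)/\hat{P}_{N}(0)$ still require estimates of the paper's type; as it stands the proposal is a programme with its central technical steps open, not a proof.
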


\begin{remark} An earlier result derived in \cite{StampachStovicek}
provides another formula for a characteristic function of $J$ written
in a factorized form. This has been done under the assumption
\begin{equation}
\sum_{n=0}^{\infty}\frac{\alpha_{n}^{\,2}}{\beta_{n}\beta_{n+1}}<\infty\ \ \text{and}\ \ \sum_{n=0}^{\infty}\frac{1}{\beta_{n}^{\,2}}<\infty.\label{eq:assum_OaM}
\end{equation}
The both results are in principle independent. The Jacobi matrix treated
as an example below in Section~\ref{sec:AlSalamCarlitz} does not
comply with (\ref{eq:assum_OaM}) and thus the method of \cite{StampachStovicek}
fails in this case. On the other hand, conditions (\ref{eq:assum_OaM})
guarantee only that $J^{-1}$ belongs to the Hilbert-Schmidt class
but not necessarily to the trace class as demonstrated by Example~4.1
in \cite{StampachStovicek}. Therefore this example is not covered
by Theorem~\ref{thm:main}. \end{remark}

The paper is organized as follows. In Section~\ref{sec:preliminaries}
we provide a brief summary of some known results which are substantial
for the proof of Theorem~\ref{thm:main}. They are mostly concerned
with the associated orthogonal polynomials and the functions of the
second kind. Here we also point out an application of the Green function
of a Jacobi matrix which is of importance for our purposes. Section~\ref{sec:the_proof}
is entirely devoted to the proof of Theorem~\ref{thm:main}. In Section~\ref{sec:AlSalamCarlitz}
we treat the Al\textendash Salam\textendash Carlitz II polynomials
as an example of an orthogonal polynomial sequence to which our approach
can be successfully applied.

\section{Preliminaries \label{sec:preliminaries}}

\subsection{Associated polynomials, functions of the second kind and the Weyl
function}

Here we summarize some known results concerning the associated orthogonal
polynomials and the associated functions which are substantial for
the proof of Theorem~\ref{thm:main}. We may provide also a few additional
observations. Further details can be found, for instance, in \cite{Belmehdi,VanAssche}.

Given an arbitrary Jacobi matrix $\mathcal{J}$, the so called sequence
of \emph{polynomials of the second kind}, $\{Q_{n}(x)\}$, is again
determined by the three-term recurrence 
\begin{equation}
\alpha_{n}Q_{n+1}(x)+(\beta_{n}-x)Q_{n}(z)+\alpha_{n-1}Q_{n-1}(x)=0,\ n\geq1,\label{eq:recurr_Q}
\end{equation}
with the initial data $Q_{0}(x)=0$, $Q_{1}(x)=1/\alpha_{0}$. Since
$\{\hat{P}_{n}(z)\}$ and $\{Q_{n}(z)\}$ obey the same recurrence
equation it is a matter of routine manipulations to derive that
\begin{equation}
Q_{n}(z)=\left(\sum_{j=0}^{n-1}\,\frac{1}{\alpha_{j}\hat{P}_{j}(z)\hat{P}_{j+1}(z)}\right)\!\hat{P}_{n}(z),\ n=0,1,2,\ldots,\label{eq:Qn_Pn}
\end{equation}
provided that $\forall n\geq1$, $\hat{P}_{n}(z)\neq0$. Actually,
assuming (\ref{eq:recurr_Phat}) one readily verifies that (\ref{eq:Qn_Pn})
is a solution of (\ref{eq:recurr_Q}) and fulfills the required initial
condition. If $\mathcal{J}$ is Hamburger determinate and $z\in\mathbb{C}$
does not belong to the convex hull of $\supp\mu$ then, indeed, $\hat{P}_{n}(z)\neq0$
for all $n\geq1$.

Note that (\ref{eq:recurr_Phat}) and (\ref{eq:recurr_Q}) can be
respectively rewritten as vector equations in $\mathbb{C}^{\infty}$,
namely
\[
(\mathcal{J}-z)\hat{P}_{\ast}(z)=\mathbf{0},\ (\mathcal{J}-z)Q_{\ast}(z)=\mathbf{e}_{0},
\]
with the initial data $\hat{P}_{0}(z)=1$, $Q_{0}(z)=0$. Consequently,
\[
(\mathcal{J}-z)\big(w(z)\hat{P}_{\ast}(z)+Q_{\ast}(z)\big)=\mathbf{e}_{0},\ \langle\mathbf{e}_{0},w(z)\hat{P}_{\ast}(z)+Q_{\ast}(z)\rangle=w(z).
\]
In the Hamburger determinate case the uniqueness of the solution implies
that
\begin{equation}
\forall z\in\varrho(J),\ w(z)\hat{P}_{\ast}(z)+Q_{\ast}(z)=(J-z)^{-1}\mathbf{e}_{0}=\mathbf{w}(z)\in\ell^{2}.\label{eq:wPh_plus_Q_l2}
\end{equation}
As a matter of fact, note that for any $z\in\varrho(J)$, $w(z)$
is the unique complex number such that $w(z)\hat{P}_{\ast}(z)+Q_{\ast}(z)$
is square summable.

In \cite{Berg}, a generalization of Markov's Theorem has been derived
which is applicable to any Hamburger determinate Jacobi matrix $\mathcal{J}$.
Denote by $\mathcal{Z}_{n}$ the set of roots of $\hat{P}_{n}(x)$
and put
\begin{equation}
\Lambda:=\bigcap_{N=1}^{\infty}\,\overline{\bigcup_{n=N}^{\infty}\mathcal{Z}_{n}}\,.\label{eq:Lambda}
\end{equation}
It is well known that $\supp\mu\subset\Lambda$. Then for any $z\in\mathbb{C}\setminus\Lambda\subset\varrho(J)$
the limit $\lim_{n\to\infty}Q_{n}(z)/\hat{P}_{n}(z)$ exists. In that
case it follows from (\ref{eq:wPh_plus_Q_l2}) that necessarily
\[
\lim_{n\to\infty}\frac{Q_{n}(z)}{\hat{P}_{n}(z)}=-w(z)=-\langle\mathbf{e}_{0},(J-z)^{-1}\mathbf{e}_{0}\rangle=\int\frac{\text{d}\mu(\lambda)}{z-\lambda}\,.
\]

With $\mathcal{J}$ one can associate a sequence of Jacobi matrices
$\mathcal{J}^{(k)}$, $k\geq0$. $\mathcal{J}^{(k)}$ is obtained
from $\mathcal{J}$ by deleting the first $k$ rows and columns. In
particular, $\mathcal{J}^{(0)}=\mathcal{J}$. Every Jacobi matrix
$\mathcal{J}^{(k)}$ again determines an orthonormal polynomial sequence
$\{\hat{P}_{n}^{(k)}(x);\,n\geq0\}$ unambiguously defined by $\mathcal{J}^{(k)}\hat{P}_{\ast}^{(k)}(x)=x\hat{P}_{\ast}^{(k)}(x)$
and $\hat{P}_{0}^{(k)}(x)=1$. Polynomials $\hat{P}_{n}^{(k)}(x)$
are called \emph{associated orthogonal polynomials}. Note that
\begin{equation}
\hat{P}_{n}^{(1)}(x)=Q_{n+1}(x)/Q_{1}(0),\ n\geq0.\label{eq:Phat1_Q}
\end{equation}
If convenient, we shall identify $\hat{P}_{n}(x)\equiv\hat{P}_{n}^{(0)}(x)$.

\begin{proposition} If a Jacobi matrix $\mathcal{J}$ is determinate
then the associated Jacobi matrices $\mathcal{J}^{(k)}$ are determinate
for all $k\geq1$. \end{proposition}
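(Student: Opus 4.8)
The plan is to prove the statement first for $k=1$ and then to obtain the general case by a trivial induction, observing that deleting the first $k$ rows and columns is the same as iterating the one-step deletion, i.e. $\mathcal{J}^{(k)}=\big(\mathcal{J}^{(k-1)}\big)^{(1)}$. For the step $k=1$ I would invoke the well-known equivalence: a Jacobi matrix is Hamburger determinate if and only if $\sum_{n=0}^{\infty}|\hat{P}_{n}(z_{0})|^{2}=\infty$ for one (equivalently, every) $z_{0}\in\mathbb{C}\setminus\mathbb{R}$; this is but a reformulation of essential self-adjointness of $\dot{J}$, cf. \cite{Akhiezer}, one half of it having been recalled above for the determinate case. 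By relation (\ref{eq:Phat1_Q}) one has $\hat{P}_{n}^{(1)}(z)=Q_{n+1}(z)/Q_{1}(0)$ with $Q_{1}(0)=1/\alpha_{0}$ a fixed nonzero constant, so $\sum_{n}|\hat{P}_{n}^{(1)}(z_{0})|^{2}=\alpha_{0}^{\,2}\sum_{n}|Q_{n}(z_{0})|^{2}$ and the task reduces to showing that $Q_{\ast}(z_{0})\notin\ell^{2}$ for some non-real $z_{0}$.

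To see this, fix any $z_{0}\in\mathbb{C}\setminus\mathbb{R}$; since $\mathcal{J}$ is determinate, $z_{0}\in\varrho(J)$. Formula (\ref{eq:wPh_plus_Q_l2}) says that $w(z_{0})\hat{P}_{\ast}(z_{0})+Q_{\ast}(z_{0})=(J-z_{0})^{-1}\mathbf{e}_{0}\in\ell^{2}$ and, as remarked right after (\ref{eq:wPh_plus_Q_l2}), $w(z_{0})$ is the \emph{only} scalar $c$ for which $c\,\hat{P}_{\ast}(z_{0})+Q_{\ast}(z_{0})$ is square summable. Hence $Q_{\ast}(z_{0})$, which corresponds to the choice $c=0$, fails to be square summable provided $w(z_{0})\neq 0$; and $w(z_{0})\neq 0$ because $w$ is, up to a sign, the Stieltjes transform of the probability measure $\mu$, so that $\operatorname{Im} w(z_{0})=\operatorname{Im}\langle\mathbf{e}_{0},(J-z_{0})^{-1}\mathbf{e}_{0}\rangle=(\operatorname{Im} z_{0})\,\|(J-z_{0})^{-1}\mathbf{e}_{0}\|^{2}\neq 0$ since $\mathbf{e}_{0}\neq 0$. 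Therefore $\sum_{n}|Q_{n}(z_{0})|^{2}=\infty$, whence $\sum_{n}|\hat{P}_{n}^{(1)}(z_{0})|^{2}=\infty$ and $\mathcal{J}^{(1)}$ is determinate. Applying this with $\mathcal{J}$ replaced by $\mathcal{J}^{(k-1)}$ and iterating yields determinacy of $\mathcal{J}^{(k)}$ for every $k\geq 1$.

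I do not expect a genuine obstacle here: the argument is a short assembly of the already established relation (\ref{eq:wPh_plus_Q_l2}), the uniqueness remark following it, relation (\ref{eq:Phat1_Q}), and the standard criterion relating determinacy to the $\ell^{2}$-growth of the orthonormal polynomials. The only points deserving a line of justification are the non-vanishing of the Weyl function at a non-real point (handled by the imaginary-part identity above) and the purely combinatorial identity $\big(\mathcal{J}^{(k-1)}\big)^{(1)}=\mathcal{J}^{(k)}$, which is what makes the one-step result iterable.
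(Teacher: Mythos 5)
Your proof is correct and follows essentially the same route as the paper: reduce to $k=1$, use (\ref{eq:Phat1_Q}) together with (\ref{eq:wPh_plus_Q_l2}) and $\hat{P}_{\ast}(z)\notin\ell^{2}$ to conclude $Q_{\ast}(z)\notin\ell^{2}$ at a non-real point, which gives vanishing deficiency indices for $\dot{J}^{(1)}$. The only cosmetic difference is that you justify $w(z_{0})\neq 0$ by the imaginary-part identity, whereas the paper cites the no-zero property of Stieltjes transforms outside the convex hull of $\supp\mu$; both are valid.
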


\begin{proof} Obviously it is sufficient to verify only the case
$k=1$. The Weyl function is, up to a sign, the Stieltjes transform
of the probability measure $\mu$ and as such it is known to have
no zeros outside the convex hull of $\supp\mu$ \cite{Stieltjes}.
Taking into account that $w(z)\hat{P}_{\ast}(z)+Q_{\ast}(z)\in\ell^{2}$
but $\hat{P}_{\ast}(z)\notin\ell^{2}$ we deduce that $Q_{\ast}(z)\notin\ell^{2}$,
equivalently $\hat{P}_{\ast}^{(1)}(z)\notin\ell^{2}$, for all $z\in\mathbb{C}\setminus\mathbb{R}$.
Hence the deficiency indices of $\dot{J}^{(1)}$ are $(0,0)$ meaning
that $\dot{J}^{(1)}$ is essentially self-adjoint. \end{proof}

Relation (\ref{eq:Phat1_Q}) can be readily generalized. For a given
$k\geq0$, let a polynomial sequence $\{Q_{n}^{(k)}(x);\,n\geq0\}$
be the unique solution of the equations
\begin{equation}
(\mathcal{J}-x)Q_{\ast}^{(k)}(x)=\mathbf{e}_{k},\ Q_{0}^{(k)}(x)=Q_{1}^{(k)}(x)=\ldots=Q_{k}^{(k)}(x)=0.\label{eq:eqs_Qk}
\end{equation}
Then $Q_{n}^{(0)}(x)\equiv Q_{n}(x)$ and
\begin{equation}
\hat{P}_{n}^{(k)}=Q_{n+k}^{(k-1)}(x)/Q_{k}^{(k-1)}(0),\ n\geq0,\,k\geq1.\label{eq:Phatk_Q}
\end{equation}

Up to the end of the current subsection $\mathcal{J}$ is supposed
to be Hamburger determinate. Again, $J^{(k)}$ is the unique self-adjoint
operator in $\ell^{2}$ corresponding to $\mathcal{J}^{(k)}$. Assume
additionally that
\begin{equation}
J\geq\gamma>0.\label{eq:J_positive}
\end{equation}
Then it is clear that $J^{(k)}\geq\gamma>0$ for all $k\geq1$.

For any $k\geq0$ and $z\in\varrho(J)$ define
\begin{equation}
\mathbf{w}^{(k)}(z):=(J-z)^{-1}\mathbf{e}_{k}\in\ell^{2}.\label{eq:def_vec_wk}
\end{equation}
Particularly, $\mathbf{w}^{(0)}(z)\equiv\mathbf{w}(z)$. The column
vector $\mathbf{w}^{(k)}(z)$ solves the equations
\[
(\mathcal{J}-z)\mathbf{w}^{(k)}(z)=\mathbf{e}_{k}\ \text{in}\ \mathbb{C}^{\infty},\ \langle\mathbf{e}_{0},\mathbf{w}^{(k)}(z)\rangle=w_{k}(z).
\]
By a similar reasoning as above,
\begin{equation}
\forall z\in\varrho(J),\ \mathbf{w}^{(k)}(z)=w_{k}(z)\hat{P}_{\ast}(z)+Q_{\ast}^{(k)}(z)\in\ell^{2}.\label{eq:vec_wk}
\end{equation}
Since $\hat{P}_{\ast}(z)\notin\ell^{2}$ the complex number $w_{k}(z)$
is unique such that $w_{k}(z)\hat{P}_{\ast}(z)+Q_{\ast}^{(k)}(z)$
is square summable.

Referring again to \cite{Berg} we know that the limit $\lim_{n\to\infty}\hat{P}_{n}^{(k+1)}(z)/\hat{P}_{n+1}^{(k)}(z)$
exists for all $k\geq0$ and $z\in\mathbb{C}\setminus[\,\gamma,\infty)$.
In view of (\ref{eq:Phatk_Q}) one finds that the limit $\lim_{n\to\infty}Q_{n}^{(k)}(z)/\hat{P}_{n}(z)$
exists as well. Then, in regard of (\ref{eq:vec_wk}), necessarily
\begin{equation}
\lim_{n\to\infty}\frac{Q_{n}^{(k)}(z)}{\hat{P}_{n}(z)}=-w_{k}(z),\ k\geq0,\,z\in\mathbb{C}\setminus[\,\gamma,\infty).\label{eq:lim_Qkn_over_Phn}
\end{equation}
This is a well known generalization of Markov's theorem, see \cite[Theorem\,2]{VanAssche}.

In view of (\ref{eq:def_vec_wk}), (\ref{eq:vec_wk}) and (\ref{eq:eqs_Qk})
it holds true that
\[
\forall z\in\varrho(J),\,\forall m\leq n,\ \langle\mathbf{e}_{m},(J-z)^{-1}\mathbf{e}_{n}\rangle=\langle\mathbf{e}_{m},w_{n}(z)\hat{P}_{\ast}(z)+Q_{\ast}^{(k)}(z)\rangle=\hat{P}_{m}(z)w_{n}(z).
\]
On the other hand, for $m>n$ we have
\[
\langle\mathbf{e}_{m},(J-z)^{-1}\mathbf{e}_{n}\rangle=\hat{P}_{m}(z)w_{n}(z)+Q_{m}^{(n)}(z)
\]
and, at the same time,
\[
\langle\mathbf{e}_{m},(J-z)^{-1}\mathbf{e}_{n}\rangle=\overline{\langle\mathbf{e}_{n},(J-\overline{z})^{-1}\mathbf{e}_{m}\rangle}=w_{m}(z)\hat{P}_{n}(z).
\]
Whence
\begin{equation}
\forall m>n,\ Q_{m}^{(n)}(z)=w_{m}(z)\hat{P}_{n}(z)-\hat{P}_{m}(z)w_{n}(z).\label{eq:Qmn_wm_Phatn}
\end{equation}
Letting $n=0$ we again get the known equation
\begin{equation}
\forall m\geq1,\ Q_{m}(z)=w_{m}(z)-w(z)\hat{P}_{m}(z).\label{eq:Qm_wm_Phatm}
\end{equation}
Combining (\ref{eq:Qmn_wm_Phatn}) and (\ref{eq:Qm_wm_Phatm}) we
obtain
\[
\forall m>n,\forall z\in\mathbb{C}\setminus[\,\gamma,\infty),\ Q_{m}^{(n)}(z)=Q_{m}(z)\hat{P}_{n}(z)-\hat{P}_{m}(z)Q_{n}(z).
\]
If $J$ is positive definite then this is particularly true for $z=0$.
All these relations are well known, see \cite{VanAssche}.

(\ref{eq:J_positive}) is still supposed to be true. It is well known
that all roots of the polynomials $\hat{P}_{m}^{(n)}(x)$ and $Q_{m}^{(n)}(x)$
lie in $[\,\gamma,\infty)$ and therefore $\Lambda$, as defined in
(\ref{eq:Lambda}), is a subset of $[\,\gamma,\infty)$. From (\ref{eq:Qn_Pn})
it follows that for $z\in\mathbb{C}\setminus[\,\gamma,\infty)$, 
\[
w(z)=-\lim_{n\to\infty}\frac{Q_{n}(z)}{\hat{P}_{n}(z)}=-\sum_{j=0}^{\infty}\,\frac{1}{\alpha_{j}\hat{P}_{j}(z)\hat{P}_{j+1}(z)}
\]
and 
\begin{equation}
w_{n}(z)=w(z)\hat{P}_{n}(z)+Q_{n}(z)=-\left(\sum_{j=n}^{\infty}\,\frac{1}{\alpha_{j}\hat{P}_{j}(z)\hat{P}_{j+1}(z)}\right)\!\hat{P}_{n}(z),\ n\geq0.\label{eq:wn_series}
\end{equation}

Furthermore the polynomials $\hat{P}_{n}(x)$ and $Q_{n}(x)$ have
all their leading coefficients positive and since they do not change
the sign on $(-\infty,\gamma)$ we have
\[
\sgn\hat{P}_{n}(0)=(-1)^{n},\ n\geq0;\ \sgn Q_{n}(0)=(-1)^{n+1},\ n\geq1.
\]
Moreover, from (\ref{eq:wn_series}) it is seen that $w_{n}(0)/\hat{P}_{n}(0)>0$
for all $n\geq0$.

\subsection{A relation between an orthogonal polynomial sequence and the respective
Green function}

Given a Jacobi matrix $\mathcal{J}$ (not necessarily Hamburger determinate)
let $\mathcal{G}$ be a strictly lower triangular matrix with the
entries
\begin{equation}
\mathcal{G}_{m,n}=Q_{m}^{(n)}(0)\ \ \text{for}\ m,n\geq0,\ \mathcal{G}_{m,n}=0\ \ \text{otherwise},\label{eq:G}
\end{equation}
with $\{Q_{m}^{(n)}(x)\}$ being defined in (\ref{eq:eqs_Qk}). Then
$\mathcal{G}$ is a right inverse of $\mathcal{J}$ in $\mathbb{C}^{\infty}$,
$\mathcal{J}\mathcal{G}=I$, and $\mathcal{G}$ is obviously the unique
strictly lower triangular matrix with this property. $\mathcal{G}$
can be interpreted as the \emph{Green function} of the Jacobi matrix
$\mathcal{J}$ \cite{Geronimo}.

\begin{theorem}\label{thm:L} Let $\mathcal{J}$ be a Jacobi matrix,
$\{\hat{P}_{n}(x)\}$ be the respective orthonormal polynomial sequence
and $\mathcal{G}$ be the Green function of $\mathcal{J}$. Then
\begin{equation}
\hat{P}_{\ast}(x)=(I-x\mathcal{G})^{-1}\hat{P}_{\ast}(0).\label{eq:Phat_G}
\end{equation}
Conversely, equation (\ref{eq:Phat_G}) determines the strictly lower
triangular matrix $\mathcal{G}$ unambiguously. \end{theorem}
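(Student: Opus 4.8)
The plan is to show that the vector $\mathbf{u}(x):=(I-x\mathcal{G})^{-1}\hat{P}_{\ast}(0)$ coincides with $\hat{P}_{\ast}(x)$ by checking that it has the two properties which characterize $\hat{P}_{\ast}(x)$ uniquely: its zeroth component equals $1$, and it is a formal eigenvector of $\mathcal{J}$ with eigenvalue $x$. First one notes that $\mathbf{u}(x)$ is a legitimate element of $\mathbb{C}^{\infty}$: since $\mathcal{G}$ is strictly lower triangular, the Neumann series $(I-x\mathcal{G})^{-1}=\sum_{N\geq0}x^{N}\mathcal{G}^{N}$ has, in each row, only finitely many nonvanishing terms, so it defines a lower triangular matrix belonging to $\mathfrak{A}_{\infty}$ and acting on $\mathbb{C}^{\infty}$. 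Because this matrix is lower triangular with $(0,0)$-entry equal to $1$, the zeroth component of $\mathbf{u}(x)$ is $\hat{P}_{0}(0)=1$.

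The key step is the matrix identity $\mathcal{J}(I-x\mathcal{G})=\mathcal{J}-x\mathcal{J}\mathcal{G}=\mathcal{J}-xI$, which uses nothing but the defining relation $\mathcal{J}\mathcal{G}=I$ of the Green function. Applying it to $\mathbf{u}(x)$ and using associativity of products in $\mathfrak{A}_{\infty}$,
\[
(\mathcal{J}-x)\mathbf{u}(x)=\mathcal{J}(I-x\mathcal{G})(I-x\mathcal{G})^{-1}\hat{P}_{\ast}(0)=\mathcal{J}\hat{P}_{\ast}(0)=0,
\]
the last equality being (\ref{eq:Ph_eigenv_J}) evaluated at $x=0$. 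Thus the components of $\mathbf{u}(x)$ solve the three-term recurrence (\ref{eq:recurr_Phat}) with normalization $\hat{P}_{0}=1$, which has a unique solution, namely $\hat{P}_{\ast}(x)$; hence $\mathbf{u}(x)=\hat{P}_{\ast}(x)$, which is (\ref{eq:Phat_G}).

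For the converse, suppose some strictly lower triangular matrix $\mathcal{G}'$ satisfies (\ref{eq:Phat_G}). Rewriting it as $x\mathcal{G}'\hat{P}_{\ast}(x)=\hat{P}_{\ast}(x)-\hat{P}_{\ast}(0)$ and reading off the $m$-th row for $m\geq1$ gives the polynomial identity
\[
\sum_{n=0}^{m-1}\mathcal{G}'_{m,n}\hat{P}_{n}(x)=\frac{\hat{P}_{m}(x)-\hat{P}_{m}(0)}{x};
\]
since $\hat{P}_{0},\ldots,\hat{P}_{m-1}$ are linearly independent (each $\hat{P}_{n}$ has degree $n$), the entries $\mathcal{G}'_{m,0},\ldots,\mathcal{G}'_{m,m-1}$ are forced, so $\mathcal{G}'=\mathcal{G}$. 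The computation is short; the only thing requiring care is the bookkeeping — checking that all infinite-matrix products in play are well defined and associative (which holds because $\mathcal{J},\mathcal{G},(I-x\mathcal{G})^{-1}\in\mathfrak{A}_{\infty}$, where products behave well) and that (\ref{eq:Phat_G}) may be read componentwise as an identity of polynomials in $x$. I expect no genuine obstacle beyond this routine verification.
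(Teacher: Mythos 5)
Your proof is correct. The forward direction is essentially the paper's argument run in the opposite order: the paper shows that $\mathbf{v}=(I-x\mathcal{G})\hat{P}_{\ast}(x)-\hat{P}_{\ast}(0)$ satisfies $\mathcal{J}\mathbf{v}=0$ with $v_{0}=0$ and hence vanishes, while you verify that $\mathbf{u}(x)=(I-x\mathcal{G})^{-1}\hat{P}_{\ast}(0)$ is a formal eigenvector with zeroth component $1$; both rest on $\mathcal{J}\mathcal{G}=I$, the relation $\mathcal{J}\hat{P}_{\ast}(0)=0$, and the uniqueness of solutions of the three-term recurrence with prescribed initial component, and your explicit remark that the Neumann series for $(I-x\mathcal{G})^{-1}$ is row-finite correctly handles the well-definedness that the paper leaves implicit. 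Where you genuinely diverge is the converse: the paper deduces from $\hat{P}_{\ast}(x)=\hat{P}_{\ast}(0)+x\mathcal{L}\hat{P}_{\ast}(x)$ that $(I-\mathcal{J}\mathcal{L})\hat{P}_{\ast}(x)=0$, invokes Lemma~\ref{thm:MP_eq_0} (linear independence packaged through the change-of-basis matrix $\Pi$) to conclude $\mathcal{J}\mathcal{L}=I$, and then appeals to the uniqueness of the strictly lower triangular right inverse, whereas you determine the entries of $\mathcal{G}'$ row by row from the polynomial identity $\sum_{n<m}\mathcal{G}'_{m,n}\hat{P}_{n}(x)=\bigl(\hat{P}_{m}(x)-\hat{P}_{m}(0)\bigr)/x$ and the linear independence of $\hat{P}_{0},\ldots,\hat{P}_{m-1}$. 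Your route is more elementary and self-contained (it bypasses Lemma~\ref{thm:MP_eq_0} and never needs to re-identify $\mathcal{G}'$ as a right inverse of $\mathcal{J}$), at the cost of not exhibiting the structural fact, used by the paper, that any strictly lower triangular matrix satisfying (\ref{eq:Phat_G}) automatically satisfies $\mathcal{J}\mathcal{G}'=I$; the underlying mechanism — linear independence of the orthonormal polynomials — is the same in both arguments.
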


\begin{lemma}\label{thm:MP_eq_0} If $\mathcal{M}\in\mathbb{C}^{\infty,\infty}$
is a lower triangular matrix and $\mathcal{M}\hat{P}_{\ast}(x)=0$
holds in $\mathbb{C}[x]^{\infty}$ then $\mathcal{M}=0$. \end{lemma}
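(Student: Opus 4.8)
The plan is to read the vector identity $\mathcal{M}\hat{P}_{\ast}(x)=0$ off one row at a time and then invoke the elementary fact that $\hat{P}_{n}$ has degree exactly $n$. The only thing that genuinely needs to be observed at the outset is that lower-triangularity makes the statement meaningful: since $\mathcal{M}_{m,n}=0$ for $n>m$, the $m$-th component of the column vector $\mathcal{M}\hat{P}_{\ast}(x)$ is the \emph{finite} sum $\sum_{n=0}^{m}\mathcal{M}_{m,n}\hat{P}_{n}(x)$, a genuine element of $\mathbb{C}[x]$ with no convergence issue, so that ``$\mathcal{M}\hat{P}_{\ast}(x)=0$ in $\mathbb{C}[x]^{\infty}$'' is precisely the system of polynomial identities
\[
\sum_{n=0}^{m}\mathcal{M}_{m,n}\hat{P}_{n}(x)=0,\qquad m\geq0.
\]

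Next I would recall that, by the initial datum $\hat{P}_{0}(x)=1$ together with the three-term recurrence (\ref{eq:recurr_Phat}) and the assumption $\alpha_{n}>0$, each $\hat{P}_{n}$ is a polynomial of degree exactly $n$ with positive leading coefficient. Consequently $\{\hat{P}_{0},\hat{P}_{1},\ldots,\hat{P}_{m}\}$ is a basis of the space of polynomials of degree at most $m$, and in particular it is linearly independent. Hence the displayed identity forces $\mathcal{M}_{m,n}=0$ for every $0\leq n\leq m$; since $m$ is arbitrary and $\mathcal{M}$ is lower triangular, this gives $\mathcal{M}=0$. (Equivalently, for fixed $m$ one may argue by downward induction on $n$: comparing coefficients of $x^{m}$ in $\sum_{n=0}^{m}\mathcal{M}_{m,n}\hat{P}_{n}(x)=0$ yields $\mathcal{M}_{m,m}=0$ because only $\hat{P}_{m}$ contributes to that power, and then one repeats the comparison for $x^{m-1}$, and so on down to $x^{0}$.)

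There is essentially no obstacle here; the statement is a packaging, in the matrix language of $\mathfrak{A}_{\infty}$, of the triviality that a vanishing polynomial combination of $\hat{P}_{0},\ldots,\hat{P}_{m}$ has vanishing coefficients. The one subtlety worth flagging is that lower-triangularity of $\mathcal{M}$ is used twice and is indispensable: once to guarantee that the product $\mathcal{M}\hat{P}_{\ast}(x)$ is defined entrywise as a polynomial, and once to conclude that killing the entries on and below the diagonal already kills all of $\mathcal{M}$.
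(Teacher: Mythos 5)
Your proof is correct and rests on the same fact the paper uses, namely that $\hat{P}_{n}$ has degree exactly $n$, so that $\{\hat{P}_{0},\ldots,\hat{P}_{m}\}$ is linearly independent; the paper merely packages this at the matrix level, writing $\hat{P}_{\ast}(x)=\Pi\mathbf{v}(x)$ with $\Pi$ an invertible lower triangular change-of-basis matrix and concluding from $\mathcal{M}\Pi=0$, whereas you argue row by row. Your explicit remark that lower-triangularity makes each component of $\mathcal{M}\hat{P}_{\ast}(x)$ a finite, well-defined polynomial sum is a welcome point the paper leaves implicit.
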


\begin{proof} Since for every $n$, $\hat{P}_{n}(x)$ is a polynomial
of degree $n$ with with a nonzero leading coefficient  there exists
a unique lower triangular matrix $\Pi$ with no zeros on the diagonal
such that
\[
\hat{P}_{\ast}(x)=\Pi\mathbf{v}(x)\ \ \text{where}\ \ \mathbf{v}(x)^{T}=(1,x,x^{2},\ldots).
\]
Then $\mathcal{M}\hat{P}_{\ast}(x)=0$ is equivalent to $\mathcal{M}\Pi=0$.
But $\Pi$ is invertible (in $\mathbb{C}^{\infty}$) and therefore
$\mathcal{M}=0$. \end{proof}

\begin{proof}[Proof of Theorem \ref{thm:L}] Owing to (\ref{eq:Ph_eigenv_J})
and since $\mathcal{J}\mathcal{G}=I$ we have $\mathcal{J}(I-x\mathcal{G})\hat{P}_{\ast}(x)=0$.
Obviously, if a column vector $\mathbf{v}$, $\mathbf{v}^{T}=(v_{0},v_{1,},v_{2},\ldots)$,
obeys $\mathcal{J}\mathbf{v}=0$ and $v_{0}=0$ then $\mathbf{v}=0$.
In particular, this observation is applicable to
\[
\mathbf{v}=(I-x\mathcal{G})\hat{P}_{\ast}(x)-\hat{P}_{\ast}(0),
\]
and (\ref{eq:Phat_G}) follows.

Conversely, suppose $\hat{P}_{\ast}(x)=(I-x\mathcal{L})^{-1}\hat{P}_{\ast}(0)$
holds for a strictly lower triangular matrix $\mathcal{L}$. Then
$\hat{P}_{\ast}(x)=\hat{P}_{\ast}(0)+x\mathcal{L}\hat{P}_{\ast}(x).$
In regard of (\ref{eq:Ph_eigenv_J}) we obtain
\[
x\hat{P}_{\ast}(x)=\mathcal{J}\hat{P}_{\ast}(x)=\mathcal{J}\big(\hat{P}_{\ast}(0)+x\mathcal{L}\hat{P}_{\ast}(x)\big)=x\mathcal{J}\mathcal{L}\hat{P}_{\ast}(x)
\]
whence $(I-\mathcal{J}\mathcal{L})\hat{P}_{\ast}(x)=0$. Observing
that $I-\mathcal{J}\mathcal{L}$ is lower triangular we have, by Lemma~\ref{thm:MP_eq_0},
$\mathcal{J}\mathcal{L}=I$. But this equation determines $\mathcal{L}$
unambiguously. \end{proof}

Finally, note that in the case $\mathcal{J}$ is Hamburger determinate
and $J$ is positive definite (\ref{eq:lim_Qkn_over_Phn}) tells us
that
\begin{equation}
\lim_{m\to\infty}\frac{\mathcal{G}_{m,n}}{\hat{P}_{m}(0)}=-w_{n}(0)=-\langle\mathbf{e}_{n},J^{-1}\mathbf{e}_{0}\rangle.\label{eq:lim_Lhat_Phat}
\end{equation}

\section{Proof of the main theorem \label{sec:the_proof}}

\begin{proof}[Proof of Theorem~\ref{thm:main}] First, we shall show
that the series in (\ref{eq:Fgoth_series}) converges locally uniformly
on $\mathbb{C}$ and, moreover, $\mathfrak{F}(z)$ is a locally uniform
limit of the sequence $\{\hat{P}_{n}(z)/\hat{P}_{n}(0)\}$. Let
\[
\kappa_{n}:=w_{n}(0)\hat{P}_{n}(0).
\]
By the assumptions, see (\ref{eq:Tr_Jinv}), $\kappa_{n}$ are all
positive and $\{\kappa_{n}\}\in\ell^{1}$. Equation (\ref{eq:Phat_G})
means that
\begin{equation}
\forall n\geq0,\ \hat{P}_{n}(z)=\hat{P}_{n}(0)+\sum_{\ell=1}^{n}z^{\ell}\sum_{0\leq k_{1}<k_{2}<\ldots<k_{\ell}<n}\mathcal{G}_{nk_{\ell}}\mathcal{G}_{k_{\ell}k_{\ell-1}}\cdots\mathcal{G}_{k_{2}k_{1}}\hat{P}_{k_{1}}(0).\label{eq:Phat_sum}
\end{equation}
Let us introduce, for the purpose of the proof, a strictly lower triangular
matrix $K$ and a vector $\mathbf{k}\in\ell^{2}$:
\begin{equation}
\forall m,n\geq0,\ K_{m,n}:=\frac{\hat{P}_{n}(0)}{\hat{P}_{m}(0)}\,\sqrt{\frac{\kappa_{m}}{\kappa_{n}}}\,\mathcal{G}_{m,n};\ \mathbf{k}^{T}:=(\sqrt{\kappa_{0}},\sqrt{\kappa_{1}},\sqrt{\kappa_{2}},\ldots).\label{eq:K_t}
\end{equation}

Let us show that
\begin{equation}
\forall m>n,\ \left|\frac{\hat{P}_{n}(0)}{\hat{P}_{m}(0)}\,\mathcal{G}_{m,n}\right|\leq\kappa_{n}\ \,\text{and}\,\ |K_{m,n}|\leq\sqrt{\kappa_{m}\kappa_{n}}\,.\label{eq:estim_K}
\end{equation}
In fact, the latter estimate is obviously a consequence of the former
one. Concerning the former estimate we have, in view of (\ref{eq:G})
and (\ref{eq:Qmn_wm_Phatn}),
\[
\left|\frac{\hat{P}_{n}(0)}{\hat{P}_{m}(0)}\,\mathcal{G}_{m,n}\right|=\hat{P}_{n}(0)^{2}\left|\frac{w_{m}(0)}{\hat{P}_{m}(0)}-\frac{w_{n}(0)}{\hat{P}_{n}(0)}\right|\!.
\]
Remember that
\[
\frac{w_{n}(0)}{\hat{P}_{n}(0)}=-\sum_{j=n}^{\infty}\,\frac{1}{\alpha_{j}\hat{P}_{j}(0)\hat{P}_{j+1}(0)}>0
\]
is a decreasing sequence in $n$, see (\ref{eq:wn_series}). Hence,
still assuming that $m>n$,
\[
\left|\frac{\hat{P}_{n}(0)}{\hat{P}_{m}(0)}\,\mathcal{G}_{m,n}\right|\leq\hat{P}_{n}(0)^{2}\max\!\left\{ \frac{w_{m}(0)}{\hat{P}_{m}(0)}\,,\frac{w_{n}(0)}{\hat{P}_{n}(0)}\right\} \leq\hat{P}_{n}(0)^{2}\,\frac{w_{n}(0)}{\hat{P}_{n}(0)}=\kappa_{n}.
\]
The estimate in particular means that the matrix $K$ can be regarded
as a Hilbert-Schmidt operator on $\ell^{2}$.

We can rewrite (\ref{eq:Phat_sum}) as
\begin{equation}
\frac{\hat{P}_{n}(z)}{\hat{P}_{n}(0)}=1+\sum_{j=1}^{n}\frac{z^{j}}{\sqrt{\kappa_{n}}}\,\langle\mathbf{e}_{n},K^{j}\mathbf{k}\rangle.\label{eq:Phat/Phat0_series}
\end{equation}
For all $j$, $n$, $1\leq j\leq n$, we can estimate
\begin{eqnarray}
\left|\frac{1}{\sqrt{\kappa_{n}}}\,\langle\mathbf{e}_{n},K^{j}\mathbf{k}\rangle\right| & = & \Bigg|\sum_{0\leq k_{1}<k_{2}<\ldots<k_{j}<n}\frac{1}{\sqrt{\kappa_{n}}}\,K_{nk_{j}}K_{k_{j}k_{j-1}}\cdots K_{k_{2}k_{1}}\sqrt{\kappa_{k_{1}}}\Bigg|\nonumber \\
 & \leq & \sum_{0\leq k_{1}<k_{2}<\ldots<k_{j}<n}\kappa_{k_{j}}\kappa_{k_{j-1}}\cdots\kappa_{k_{2}}\kappa_{k_{1}}\nonumber \\
 & \leq & \frac{1}{j!}\Bigg(\sum_{s=1}^{\infty}\kappa_{s}\Bigg)^{\!j}.\label{eq:estim_Kj_t}
\end{eqnarray}
Consequently,
\begin{equation}
\forall j\geq0,\ \|K^{j}\mathbf{k}\|\leq\frac{1}{j!}\Bigg(\sum_{s=1}^{\infty}\kappa_{s}\Bigg)^{\!j+1/2}.\label{eq:estim_norm_Kj_t}
\end{equation}

Very similarly, for $m>n$ and $j\geq1$,
\[
\langle\mathbf{e}_{m},K^{j}\mathbf{e}_{n}\rangle|\leq\sqrt{\kappa_{m}\kappa_{n}}\,\sum_{n<k_{1}<k_{2}<\ldots<k_{j-1}<m}\kappa_{k_{j-1}}\cdots\kappa_{k_{2}}\kappa_{k_{1}}\leq\frac{\sqrt{\kappa_{m}\kappa_{n}}}{(j-1)!}\Bigg(\sum_{s=1}^{\infty}\kappa_{s}\Bigg)^{\!j-1}\!.
\]
It follows that
\[
\forall j\geq1,\ \|K^{j}\|\leq\frac{1}{2(j-1)!}\Bigg(\sum_{s=1}^{\infty}\kappa_{s}\Bigg)^{\!j}.
\]
Hence the spectral radius of $K$ equals $0$. Estimate (\ref{eq:estim_Kj_t})
implies that, in addition,
\begin{equation}
\forall z\in\mathbb{C},\,\forall n\geq0,\ |\langle\mathbf{e}_{n},(I-zK)^{-1}\mathbf{k}\rangle|\leq\exp\!\left(|z|\sum_{s=1}^{\infty}\kappa_{s}\right)\!\sqrt{\kappa_{n}}\,.\label{eq:estim_Kresolv_t}
\end{equation}

Furthermore, (\ref{eq:lim_Lhat_Phat}) means that
\[
\lim_{n\to\infty}\frac{K_{n,k}}{\sqrt{\kappa_{n}}}=-\frac{w_{k}(0)\hat{P}_{k}(0)}{\sqrt{\kappa_{k}}}\,,
\]
and we have
\[
\frac{1}{\sqrt{\kappa_{n}}}\,\langle\mathbf{e}_{n},K^{j}\mathbf{k}\rangle=\sum_{k=0}^{\infty}\frac{K_{n,k}}{\sqrt{\kappa_{n}}}\,\langle\mathbf{e}_{k},K^{j-1}\mathbf{k}\rangle
\]
along with the estimate, as it follows from (\ref{eq:estim_Kj_t}),
\[
\left|\frac{K_{n,k}}{\sqrt{\kappa_{n}}}\,\langle\mathbf{e}_{k},K^{j-1}\mathbf{k}\rangle\right|\leq\frac{\kappa_{k}}{(j-1)!}\Bigg(\sum_{s=1}^{\infty}\kappa_{s}\Bigg)^{\!j-1}.
\]
Since this upper bound is independent of $n$ and summable in $k$,
\[
\lim_{n\to\infty}\frac{1}{\sqrt{\kappa_{n}}}\,\langle\mathbf{e}_{n},K^{j}\mathbf{k}\rangle=-\sum_{k=0}^{\infty}\frac{w_{k}(0)\hat{P}_{k}(0)}{\sqrt{\kappa_{k}}}\,\langle\mathbf{e}_{k},K^{j-1}\mathbf{k}\rangle=-\langle\mathbf{k},K^{j-1}\mathbf{k}\rangle.
\]

Similarly, the upper bound in (\ref{eq:estim_Kj_t}) is independent
of $n$ and we obtain from (\ref{eq:Phat/Phat0_series})
\[
\lim_{n\to\infty}\frac{\hat{P}_{n}(z)}{\hat{P}_{n}(0)}=1-z\sum_{j=0}^{\infty}z^{j}\langle\mathbf{k},K^{j}\mathbf{k}\rangle=1-z\,\langle\mathbf{k},(I-zK)^{-1}\mathbf{k}\rangle.
\]
The estimate guarantees that the limit is locally uniform on $\mathbb{C}$
and, in view of (\ref{eq:estim_norm_Kj_t}), the resulting series,
too, converges uniformly on every compact subset of $\mathbb{C}$.

From defining equations (\ref{eq:K_t}) along with (\ref{eq:Phat_G})
one finds that
\[
\sqrt{\kappa_{n}}\,\langle\mathbf{e}_{n},(I-zK)^{-1}\mathbf{k}\rangle=w_{n}(0)\hat{P}_{n}(z).
\]
Hence
\[
1-z\,\langle\mathbf{k},(I-zK)^{-1}\mathbf{k}\rangle=1-z\sum_{n=0}^{\infty}w_{n}(0)\hat{P}_{n}(z).
\]
Moreover, referring to (\ref{eq:estim_Kresolv_t}) we have the bound
\begin{equation}
|w_{n}(0)\hat{P}_{n}(z)|=|\sqrt{\kappa_{n}}\,\langle\mathbf{e}_{n},(I-zK)^{-1}\mathbf{k}\rangle|\leq\exp\!\left(|z|\sum_{s=1}^{\infty}\kappa_{s}\right)\!\kappa_{n}\label{eq:estim_wn_Phn}
\end{equation}
showing that the series converges locally uniformly on $\mathbb{C}$.

Second, we shall show that $\mathfrak{F}^{-1}(0)=\Spec J$ with all
zeros of $\mathfrak{F}(z)$ being simple. We assume that $J$ is positive
definite, and we have already proved that $J^{-1}$ is a compact operator.
Hence $\Spec J$ constitutes of simple eigenvalues which can be ordered
increasingly. We shall use the notation
\begin{equation}
\Spec J=\{\lambda_{n};\,n\geq1\},\ 0<\lambda_{1}<\lambda_{2}<\lambda_{3}<\ldots,\label{eq:specJ}
\end{equation}
and we have $\lim\lambda_{n}=+\infty$.

As explained for instance in \cite[\S II.4]{Chihara}, each eigenvalue
$\lambda_{j}$ can be obtained as the limit of a sequence of roots
of the polynomials $\hat{P}_{n}(x)$. Denote by $x_{n,j}$ the $j$th
root of the polynomial $\hat{P}_{n}(x)$, $n\geq1$, $j=1,2,\ldots,n$,
while supposing that the roots are numbered in increasing order. Then,
for a fixed $j\in\mathbb{N}$, the sequence $\{x_{n,j};\,n=j,j+1,j+2,\ldots\}$
is strictly decreasing and
\begin{equation}
\lim_{n\to\infty}x_{n,j}=\lambda_{j}.\label{eq:lim_xnj_lmbd}
\end{equation}
Using the already proven convergence, $\hat{P}_{n}(z)/\hat{P}_{n}(0)\rightrightarrows\mathfrak{F}(z)$
locally on $\mathbb{C}$, one can determine the zero set $\mathcal{F}^{-1}(0)$.
It is a basic fact that $\hat{P}_{n}'(z)/\hat{P}_{n}(0)\rightrightarrows\mathfrak{F}'(z)$
locally on $\mathbb{C}$, and since the entire function $\mathfrak{F}(z)$
is not equal to zero identically its zero set has no limit points.

Consider a positively oriented circle $C(\lambda_{j},\epsilon)$ centered
at some eigenvalue $\lambda_{j}$ and with a radius $\epsilon>0$
sufficiently small so that the distance of $\lambda_{j}$ from the
rest of the spectrum of $J$ is greater than $\epsilon$ and such
that $\mathfrak{F}(z)$ has no zeros on $C(\lambda_{j},\epsilon)$.
In view of (\ref{eq:lim_xnj_lmbd}), $\hat{P}_{n}(z)$ has exactly
one root in the interior of $C(\lambda_{j},\epsilon)$ for all sufficiently
large $n$. Then the number of zeros of $\mathfrak{F}(z)$ located
in the interior of $C(\lambda_{j},\epsilon)$ equals
\[
\frac{1}{2\pi i}\int_{C(\lambda_{j},\epsilon)}\frac{\mathfrak{F}'(z)}{\mathfrak{F}(z)}\,\mbox{d}z=\lim_{n\to\infty}\,\frac{1}{2\pi i}\int_{C(\lambda_{j},\epsilon)}\frac{\hat{P}_{n}'(z)}{\hat{P}_{n}(z)}\,\mbox{d}z=1.
\]
Since $\epsilon$ can be made arbitrarily small it is clear that $\lambda_{j}$
must be a simple root of $\mathfrak{F}(z)$. Moreover, by a very analogous
reasoning, if $z_{0}\in\mathbb{C}$ is not an eigenvalue of $J$ then
it cannot be a root of $\mathfrak{F}(z)$.

Third, let us verify (\ref{eq:Fgoth_series}). With the above estimates
it is immediate to show that the order of $\mathfrak{F}$ does not
exceed one. Actually, from (\ref{eq:Fgoth_series}) and (\ref{eq:estim_wn_Phn})
it is seen that
\[
|\mathfrak{F}(z)|\leq1+|z|\exp\!\left(|z|\sum_{j=0}^{\infty}\kappa_{j}\right)\sum_{j=0}^{\infty}\kappa_{j}\leq\exp(C|z|)
\]
for some constant $C>0$. The assertion readily follows. Note that
by Hadamard's Factorization Theorem and since $\mathfrak{F}(0)=1$
there exists $c\in\mathbb{R}$ such that
\[
\mathfrak{F}(z)=e^{cz}\prod_{n=1}^{\infty}\left(1-\frac{z}{\lambda_{n}}\right)\!e^{z/\lambda_{n}}.
\]
But by our assumptions $\Tr J^{-1}=\sum_{n=1}^{\infty}\lambda_{n}^{\,-1}<\infty$
and therefore we even have
\begin{equation}
\mathfrak{F}(z)=e^{dz}\prod_{n=1}^{\infty}\left(1-\frac{z}{\lambda_{n}}\right)\label{eq:Hadamard0}
\end{equation}
where $d=c+\sum_{n=1}^{\infty}\lambda_{n}^{\,-1}$. To determine $d$
one can calculate $\mathfrak{F}'(0)$ using (\ref{eq:Fgoth_series})
and (\ref{eq:Hadamard0}),
\[
\mathfrak{F}'(0)=-\sum_{n=0}^{\infty}w_{n}(0)\hat{P}_{n}(0)=d-\sum_{n=1}^{\infty}\frac{1}{\lambda_{n}}\,.
\]
According to (\ref{eq:Tr_Jinv}),
\[
\sum_{n=0}^{\infty}w_{n}(0)\hat{P}_{n}(0)=\Tr J^{-1}=\sum_{n=1}^{\infty}\frac{1}{\lambda_{n}}\,.
\]
Hence $d=0$. \end{proof}

\begin{remark} Comparing (\ref{eq:Lambda}), where we let $\mathcal{Z}_{n}=\{x_{n,j};\,j=1,2,\ldots,n\}$,
to the description of the spectrum of $J$ given in (\ref{eq:specJ}),
(\ref{eq:lim_xnj_lmbd}) we find that in this case $\Lambda=\Spec J$.
\end{remark}

\section{An example: the Al-Salam\textendash Carlitz II polynomials \label{sec:AlSalamCarlitz}}

Everywhere in what follows, $0<q<1$. The Jacobi matrix $J$ corresponding
to the Al-Salam\textendash Carlitz II polynomials \cite{AlSalamCarlitz}
has the entries ($\alpha_{n}>0$)
\[
\alpha_{n}^{\,2}=aq^{-2n-1}(1-q^{n+1}),\ \beta_{n}=(a+1)q^{-n},
\]
where $a>0$ is a parameter.

It is straightforward to verify that
\[
J=WW^{T}+I
\]
where $W$ is a lower triangular matrix with the entries
\[
W_{n,n}=\sqrt{a}\,q^{-n/2},\ W_{n+1,n}=q^{-(n+1)/2}\sqrt{1-q^{n+1}}\,,\ n=0,1,2,\ldots,
\]
and $W_{m,n}=0$ otherwise. Hence $J\geq1$ on $\span\{\mathbf{e}_{n};\,n=0,1,2,\ldots\}$.

The Al-Salam\textendash Carlitz II polynomials can be expressed as
\cite{KoekoekLeskySwarttouw}
\begin{eqnarray}
V_{n}^{(a)}(x;q) & = & (-a)^{n}q^{-n(n-1)/2}\,\,_{2}\phi_{0}\!\left(q^{-n},x;-;q,\frac{q^{n}}{a}\right)\nonumber \\
 & = & (-a)^{n}q^{-n(n-1)/2}(q;q)_{n}\sum_{k=0}^{n}\frac{(x;q)_{k}a^{-k}}{(q;q)_{n-k}(q;q)_{k}}\,,\ n\in\mathbb{Z}_{+}.\label{eq:Vn_hyper}
\end{eqnarray}
The respective orthonormal polynomial sequence takes the form
\begin{equation}
\hat{P}_{n}(x)=\frac{q^{n^{2}/2}a^{-n/2}}{\sqrt{(q;q)_{n}}}\,V_{n}^{(a)}(x;q),\ n\in\mathbb{Z}_{+}.\label{eq:Phatn_Van}
\end{equation}

Values at the point $x=a$ have particularly simple form. Using the
identity \cite[Eq.\,(1.11.7)]{KoekoekLeskySwarttouw}
\[
\,_{2}\phi_{0}\!\left(q^{-n},a;-;q,\frac{q^{n}}{a}\right)=a^{-n},\ n\in\mathbb{Z}_{+},
\]
we obtain
\begin{equation}
\hat{P}_{n}(a)=\frac{(-1)^{n}}{\sqrt{(q;q)_{n}}}\left(\frac{q}{a}\right)^{\!n/2}.\label{eq:Pn_a}
\end{equation}
For the orthogonal polynomials of the second kind (\ref{eq:Qn_Pn})
we get the value
\[
Q_{n}(a)=\left(\sum_{j=0}^{n-1}\,\frac{1}{\alpha_{j}\hat{P}_{j}(a)\hat{P}_{j+1}(a)}\right)\!\hat{P}_{n}(a)=\frac{(-1)^{n+1}}{\sqrt{(q;q)_{n}}}\left(\frac{q}{a}\right)^{\!n/2}\sum_{j=0}^{n-1}\,(q;q)_{j}\,a^{j}.
\]
Note that
\begin{equation}
C_{1}(qa)^{n}\leq|Q_{n}(a)|\leq C_{2}\left(\frac{q}{a}\right)^{\!n/2}\frac{a^{n}-1}{a-1}\label{eq:Qn_a_estim}
\end{equation}
where the constants $C_{1}$, $C_{2}$ do not depend on $n$.

With the additional assumption $a<1$ we can also evaluate functions
of the second kind at $x=a$. In particular the value of the Weyl
function equals
\[
w(a)=w_{0}(a)=-\lim_{n\to\infty}\frac{Q_{n}(a)}{\hat{P}_{n}(a)}=\sum_{j=0}^{\infty}(q;q)_{j}a^{j}.
\]
For a general index $n$ we have (\ref{eq:Qm_wm_Phatm})
\begin{equation}
w_{n}(a)=w(a)\hat{P}_{n}(a)+Q_{n}(a)=\frac{(-1)^{n}}{\sqrt{(q;q)_{n}}}\left(\frac{q}{a}\right)^{\!n/2}\sum_{j=n}^{\infty}\,(q;q)_{j}\,a^{j}.\label{eq:wn_a}
\end{equation}

From (\ref{eq:Pn_a}), (\ref{eq:Qn_a_estim}) it is immediately seen,
as is well known \cite{Chihara,BergValent}, that the Hamburger moment
problem is indeterminate if and only if $q<a<q^{-1}$. To this end,
it suffices to recall the classical result \cite{Akhiezer} asserting
that a Hamburger moment problem is indeterminate if and only if the
sequences $\{\hat{P}_{n}(w)\}$ and $\{Q_{n}(w)\}$ are both square
summable where $w$ is an arbitrary fixed point in $\mathbb{C}$.

From now on we assume that $0<a\leq q$. Hence the Hamburger moment
problem is determinate. We wish to apply our method to evaluate the
characteristic function $\mathfrak{F}(z)$ for the Al-Salam\textendash Carlitz
II polynomials. This goal can be achieved if we shift the Jacobi matrix
by a constant considering $J-aI$ instead of $J$. The orthonormal
polynomial sequence corresponding to the shifted matrix is $\{\hat{P}_{n}(z+a\}$
and, similarly, the sequence of functions of the second kind after
the shift has the form $\{w_{n}(z+a)\}$.

It is straightforward to verify the assumptions of Theorem~\ref{thm:main}
for the shifted Jacobi matrix. First note that by our assumptions
$0<a\leq q<1$ and hence $J-a$ is still positive definite on $\span\{\mathbf{e}_{n};\,n=0,1,2,\ldots\}$.
Referring to (\ref{eq:Pn_a}), (\ref{eq:wn_a}) we have
\[
\sum_{n=0}^{\infty}w_{n}(a)\hat{P}_{n}(a)=\sum_{n=0}^{\infty}\frac{1}{(q;q)_{n}}\left(\frac{q}{a}\right)^{\!n}\sum_{j=n}^{\infty}\,(q;q)_{j}\,a^{j}\leq\frac{1}{(q;q)_{\infty}}\sum_{n=0}^{\infty}\left(\frac{q}{a}\right)^{\!n}\frac{a^{n}}{1-a}<\infty.
\]
Consequently, $(J-a)^{-1}$ belongs even to the trace class.

Put
\[
\mathfrak{F}^{(a)}(z):=1-z\sum_{n=0}^{\infty}w_{n}(a)\hat{P}_{n}(z+a).
\]
In view of (\ref{eq:Fgoth_series}) we have
\[
\mathfrak{F}^{(a)}(z)=\prod_{n=1}^{\infty}\!\left(1-\frac{z}{\lambda_{n}-a}\right)
\]
whence
\[
\mathfrak{F}^{(a)}(z-a)=1-(z-a)\sum_{n=0}^{\infty}w_{n}(a)\hat{P}_{n}(z)=\prod_{n=1}^{\infty}\!\left(1-\frac{z}{\lambda_{n}}\right)\!\Big/\prod_{n=1}^{\infty}\!\left(1-\frac{a}{\lambda_{n}}\right).
\]

\begin{proposition}\label{thm:Fz_AlSalamCarlitz} For $0<a\leq q$
we have
\[
1-(z-a)\sum_{n=0}^{\infty}w_{n}(a)\hat{P}_{n}(z)=\frac{(z;q)_{\infty}}{(a;q)_{\infty}}\,.
\]
Consequently, $\prod_{n=1}^{\infty}(1-z/\lambda_{n})=(z;q)_{\infty}$.
\end{proposition}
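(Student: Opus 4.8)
The plan is to evaluate the series $\sum_{n\ge0}w_n(a)\hat P_n(z)$ in closed form, recognise it as a telescoping $q$-series, and then read off both assertions — the first directly, and the second from the factorisation of $1-(z-a)\sum_n w_n(a)\hat P_n(z)$ already displayed just before the statement.

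First I would substitute the explicit expressions (\ref{eq:Vn_hyper}), (\ref{eq:Phatn_Van}) and (\ref{eq:wn_a}) into the product $w_n(a)\hat P_n(z)$; after the powers of $a$, of $q$ and of $\sqrt{(q;q)_n}$ cancel this collapses to
\[
w_n(a)\hat P_n(z)=q^{n}\Biggl(\sum_{j\ge n}(q;q)_j\,a^{j}\Biggr)\sum_{k=0}^{n}\frac{(z;q)_k\,a^{-k}}{(q;q)_{n-k}(q;q)_k}\,.
\]
All the terms here are absolutely summable over $n,j,k$: using $(q;q)_j\le1$, $\sum_{j\ge n}a^{j}=a^{n}/(1-a)$, $(q;q)_{n-k}(q;q)_k\ge(q;q)_\infty^{2}$, $|(z;q)_k|\le(-|z|;q)_\infty$ and $\sum_{k\le n}a^{-k}\le a^{-n}/(1-a)$ one obtains a majorant bounded by a constant multiple of $q^{n}$, so the order of summation may be changed at will. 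Putting $n=k+\ell$ and interchanging the $\ell$- and $j$-summations, the inner sum over $\ell$ becomes $\sum_{\ell=0}^{j-k}q^{\ell}/(q;q)_\ell=1/(q;q)_{j-k}$ (immediate induction from $(q;q)_N^{-1}-(q;q)_{N-1}^{-1}=q^{N}/(q;q)_N$), and what is left is $\sum_{j\ge k}(q;q)_j a^{j}/(q;q)_{j-k}$; writing $j=k+m$ and using $(q;q)_{k+m}/(q;q)_m=(q;q)_k(q^{k+1};q)_m/(q;q)_m$, the $q$-binomial theorem gives $\sum_{m\ge0}(q^{k+1};q)_m a^{m}/(q;q)_m=(aq^{k+1};q)_\infty/(a;q)_\infty=1/(a;q)_{k+1}$. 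The $a$-powers and the factor $(q;q)_k$ cancel once more, and the outcome is the clean formula
\[
\sum_{n=0}^{\infty}w_n(a)\hat P_n(z)=\sum_{k=0}^{\infty}\frac{q^{k}(z;q)_k}{(a;q)_{k+1}}\,.
\]

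The right-hand side telescopes: from $(1-aq^{k})-(1-zq^{k})=(z-a)q^{k}$ one checks that
\[
\frac{q^{k}(z;q)_k}{(a;q)_{k+1}}=\frac{1}{z-a}\left(\frac{(z;q)_k}{(a;q)_k}-\frac{(z;q)_{k+1}}{(a;q)_{k+1}}\right),
\]
so the $N$-th partial sum equals $\frac{1}{z-a}\bigl(1-(z;q)_{N+1}/(a;q)_{N+1}\bigr)$, and letting $N\to\infty$ (the infinite products converge since $\sum q^{k}<\infty$) gives, for $z\ne a$ and then by continuity for all $z$,
\[
\sum_{k=0}^{\infty}\frac{q^{k}(z;q)_k}{(a;q)_{k+1}}=\frac{1}{z-a}\left(1-\frac{(z;q)_\infty}{(a;q)_\infty}\right).
\]
Multiplying by $-(z-a)$ and adding $1$ yields exactly $1-(z-a)\sum_n w_n(a)\hat P_n(z)=(z;q)_\infty/(a;q)_\infty$. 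Finally, the factorisation displayed just before the statement says this same quantity equals $\prod_{n\ge1}(1-z/\lambda_n)\big/\prod_{n\ge1}(1-a/\lambda_n)$; hence $\prod_{n\ge1}(1-z/\lambda_n)=c\,(z;q)_\infty$ with $c=\prod_{n\ge1}(1-a/\lambda_n)/(a;q)_\infty$ independent of $z$, and evaluating at $z=0$ forces $c=1$. In particular $\lambda_n=q^{1-n}$ and, in the notation of the last remark, $\Lambda=\Spec J=\{q^{-k}:k\ge0\}$.

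The only real work is the bookkeeping in the reduction step: keeping track of the three interlocking summations, justifying the interchanges, and matching the shifted $q$-Pochhammer symbols so that everything cancels down to $q^{k}(z;q)_k/(a;q)_{k+1}$. Once that series is in hand, the telescoping identity and the comparison of the two products are both essentially one-liners.
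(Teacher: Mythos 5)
Your proof is correct, and while it follows the same overall strategy as the paper (insert the explicit formulas (\ref{eq:Vn_hyper}), (\ref{eq:Phatn_Van}), (\ref{eq:wn_a}), reduce the sum to a single $q$-series, evaluate it in closed form, then compare with the factorization displayed just before the statement and normalize at $z=0$), the $q$-series machinery you use is genuinely different. The paper groups the sum by powers of $a$, writing $\sum_n w_n(a)\hat P_n(z)=\sum_j X_j a^j$, evaluates the inner sums with the $q$-binomial theorem and then the $q$-Gauss summation to arrive at $\frac{1}{1-a}\,{}_2\phi_1(z,q;qa;q,q)$, and finally evaluates that ${}_2\phi_1$ via a separate lemma proved by iterating a contiguous relation and passing to the limit. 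You instead reindex the triple sum by $n=k+\ell$, use the elementary partial-sum identity $\sum_{\ell=0}^{N}q^\ell/(q;q)_\ell=1/(q;q)_N$ together with the $q$-binomial theorem to reach the same series $\sum_{k\ge0}q^k(z;q)_k/(a;q)_{k+1}$ (which is the paper's expression, since $(a;q)_{k+1}=(1-a)(qa;q)_k$), and then sum it by telescoping; that telescoping computation is in effect an independent and more elementary proof of the paper's ${}_2\phi_1$ lemma in the case needed, so you bypass both the $q$-Gauss formula and the lemma. A further merit of your write-up is that the rearrangements are explicitly justified by an absolutely convergent majorant of size $Cq^n$, which the paper leaves implicit; the paper's route is in turn more modular, resting on standard named summation formulas. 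Your concluding step, extracting $\prod_{n\ge1}(1-z/\lambda_n)=(z;q)_\infty$ by evaluating the quotient identity at $z=0$, is exactly the argument the paper leaves to the reader, and your observation that $\lambda_n=q^{1-n}$ agrees with the paper's closing discussion of $\Spec J$.
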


\begin{lemma} For all $a,c\in\mathbb{C}$, $c\neq1,q^{-1},q^{-2},\dots$,
it holds true that
\begin{equation}
\,_{2}\phi_{1}(a,q;qc;q,q)=\frac{1-c}{a-c}\!\left(1-\frac{(a;q)_{\infty}}{(c;q)_{\infty}}\right)\!.\label{eq:2phi1_lemma}
\end{equation}
\end{lemma}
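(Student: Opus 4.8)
The plan is to reduce \eqref{eq:2phi1_lemma} to an elementary telescoping summation, so that no deep $q$‑series machinery is needed. First I would write the series out: because the numerator parameter $q$ cancels the factor $(q;q)_{k}$ from the summation measure, one has $\,_{2}\phi_{1}(a,q;qc;q,q)=\sum_{k=0}^{\infty}(a;q)_{k}q^{k}/(qc;q)_{k}$. Under the standing hypothesis $c\neq 1,q^{-1},q^{-2},\dots$ none of the denominators vanishes and $(qc;q)_{k}\to(qc;q)_{\infty}\neq 0$ as $k\to\infty$; since $|q|<1$ the general term is therefore asymptotic to a fixed multiple of $q^{k}$, so the series converges absolutely and limits in $k$ may be passed through freely in what follows.

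The key step is the telescoping identity. Set $v_{k}:=(a;q)_{k}/(qc;q)_{k-1}$ for $k\ge 1$ and $v_{0}:=1-c$ (the value forced by the identity below, consistent with the usual convention $(qc;q)_{-1}=1/(c;q)_{1}$). Using $(a;q)_{k+1}=(a;q)_{k}(1-aq^{k})$ together with $(qc;q)_{k}=(qc;q)_{k-1}(1-cq^{k})$, a direct computation gives, for every $k\ge 0$,
\[
v_{k+1}-v_{k}=\frac{(a;q)_{k}}{(qc;q)_{k-1}}\left(\frac{1-aq^{k}}{1-cq^{k}}-1\right)=(c-a)\,q^{k}\,\frac{(a;q)_{k}}{(qc;q)_{k}}.
\]
At $k=0$ this reads $v_{1}-v_{0}=(1-a)-(1-c)=c-a$, which confirms the choice of $v_{0}$. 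Summing from $k=0$ to $N$ and telescoping the left‑hand side yields
\[
(c-a)\sum_{k=0}^{N}\frac{(a;q)_{k}}{(qc;q)_{k}}\,q^{k}=v_{N+1}-v_{0}=\frac{(a;q)_{N+1}}{(qc;q)_{N}}-(1-c).
\]

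Finally I would let $N\to\infty$: by the convergence noted above $(a;q)_{N+1}/(qc;q)_{N}\to(a;q)_{\infty}/(qc;q)_{\infty}$, whence $(c-a)\,_{2}\phi_{1}(a,q;qc;q,q)=(a;q)_{\infty}/(qc;q)_{\infty}-(1-c)$. Dividing by $c-a$ and using $(c;q)_{\infty}=(1-c)(qc;q)_{\infty}$ to rewrite $(a;q)_{\infty}/(qc;q)_{\infty}=(1-c)(a;q)_{\infty}/(c;q)_{\infty}$ produces precisely the right‑hand side of \eqref{eq:2phi1_lemma}. I do not anticipate any genuine obstacle here; the whole argument is the telescoping identity plus routine bookkeeping, and the only points that need a moment's care are the convergence statement and the behaviour at the endpoint $k=0$. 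If one wishes to read \eqref{eq:2phi1_lemma} literally when $a=c$, its right‑hand side is taken as the (finite) limiting value, which the same computation delivers by continuity.
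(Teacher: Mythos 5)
Your proof is correct, but it follows a genuinely different route from the paper. The paper sets $F(a,c)=1-\frac{a-c}{1-c}\,_{2}\phi_{1}(a,q;qc;q,q)$, uses the contiguous relation $\,_{2}\phi_{1}(a,q;b;q,z)=1+\frac{(1-a)z}{1-b}\,_{2}\phi_{1}(qa,q;qb;q,q)$ to get the functional equation $F(a,c)=\frac{1-a}{1-c}F(qa,qc)$, iterates it to $F(a,c)=\frac{(a;q)_{n}}{(c;q)_{n}}F(q^{n}a,q^{n}c)$, and lets $n\to\infty$. You instead find an explicit antidifference for the summand: with $v_{k}=(a;q)_{k}/(qc;q)_{k-1}$ (and $v_{0}=1-c$) you verify $v_{k+1}-v_{k}=(c-a)q^{k}(a;q)_{k}/(qc;q)_{k}$, telescope, and pass to the limit in $N$, then use $(c;q)_{\infty}=(1-c)(qc;q)_{\infty}$ to reach the stated form; I checked the identity at $k=0$ and the algebra in the final rewriting, and both are right, as is your convergence argument (the hypothesis $c\neq q^{-1},q^{-2},\dots$ keeps $(qc;q)_{k}$ nonvanishing and bounded away from $0$, so the terms are $O(q^{k})$). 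What each approach buys: your telescoping argument is fully self-contained (it even yields a closed form for the partial sums $\sum_{k=0}^{N}(a;q)_{k}q^{k}/(qc;q)_{k}$) and needs no $q$-series input beyond the definition, whereas the paper's argument is shorter on the page but quotes the contiguous relation without proof and needs the (easy) boundedness of $\,_{2}\phi_{1}(q^{n}a,q;q^{n+1}c;q,q)$ to justify the limit. Your closing remark about $a=c$ is the right reading of the statement, since the right-hand side of the lemma has $a-c$ in a denominator and must there be interpreted as a limit (the paper's $F$-formulation sidesteps this silently).
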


\begin{proof} Let
\[
F(a,c)=1-\frac{a-c}{1-c}\,\,_{2}\phi_{1}(a,q;qc;q,q).
\]
We have to show that $F(a,c)=(a;q)_{\infty}/(c;q)_{\infty}$. Since
\[
\,_{2}\phi_{1}(a,q;b;q,z)=1+\frac{(1-a)z}{1-b}\,\,_{2}\phi_{1}(qa,q;qb;q,q)
\]
one readily verifies that $F(a,c)=\big((1-a)/(1-c)\big)F(qa,qc)$
and, by mathematical induction,
\[
\forall n\in\mathbb{Z}_{+},\,F(a,c)=\frac{(a;q)_{n}}{(c;q)_{n}}\,F(q^{n}a,q^{n}c).
\]
Taking the limit $n\to\infty$ gives the result. \end{proof}

\begin{proof}[Proof of Proposition~\ref{thm:Fz_AlSalamCarlitz}] In
view of (\ref{eq:Phatn_Van}) we can write
\[
\sum_{n=0}^{\infty}w_{n}(a)\hat{P}_{n}(z)=\sum_{j=0}^{\infty}X_{j}a^{j}\ \ \text{where}\ \ X_{j}:=\sum_{n=0}^{\infty}(-1)^{n}q^{n(n+1)/2}\,\frac{(q;q)_{j+n}}{(q;q)_{n}}\,V_{n}^{(a)}(z;q).
\]
Using (\ref{eq:Vn_hyper}) we can express
\begin{eqnarray*}
X_{j} & = & \sum_{n=0}^{\infty}(qa)^{n}(q;q)_{j+n}\sum_{k=0}^{n}\frac{(z;q)_{k}a^{-k}}{(q;q)_{n-k}(q;q)_{k}}\\
 & = & \sum_{k=0}^{\infty}\frac{(z;q)_{k}(q;q)_{j+k}\,q^{k}}{(q;q)_{k}}\,\sum_{\ell=0}^{\infty}\frac{(q^{j+k+1};q)_{\ell}}{(q;q)_{\ell}}\,(qa)^{\ell}\\
 & = & \sum_{k=0}^{\infty}\frac{(z;q)_{k}(q;q)_{j+k}\,q^{k}}{(q;q)_{k}(qa;q)_{j+k+1}}\,.
\end{eqnarray*}
In the last step we have used the $q$-Binomial Theorem \cite[Eq.\,(II.3)]{GasperRahman}
\[
\sum_{\ell=0}^{\infty}\frac{(u;q)_{\ell}}{(q;q)_{\ell}}\,z^{\ell}=\frac{(uz;q)_{\infty}}{(z;q)_{\infty}}\,,\ |z|<1.
\]
Now an application of the $q$-Gauss summation \cite[Eq.\,(II.7)]{GasperRahman}
\[
\,_{2}\phi_{1}\!\left(a,b;c;q,\frac{c}{ab}\right)=\frac{(c/a;q)_{\infty}(c/b;q)_{\infty}}{(c;q)_{\infty}\big(c/(ab);q\big)_{\infty}}
\]
gives
\[
\sum_{j=0}^{\infty}\frac{(q^{k+1};q)_{j}}{(q^{k+2}a;q)_{j}}\,a^{j}=\frac{1-q^{k+1}a}{1-a}
\]
and therefore
\begin{equation}
\sum_{n=0}^{\infty}w_{n}(a)\hat{P}_{n}(z)=\sum_{j=0}^{\infty}X_{j}a^{j}=\frac{1}{1-a}\sum_{k=0}^{\infty}\frac{(z;q)_{k}}{(qa;q)_{k}}\,q^{k}=\frac{\,_{2}\phi_{1}(z,q;qa;q,q)}{1-a}\,.\label{eq:sum_wnPhatn}
\end{equation}
Combining (\ref{eq:sum_wnPhatn}) and (\ref{eq:2phi1_lemma}) we obtain
the desired equation. \end{proof}

Referring to Proposition~\ref{thm:Fz_AlSalamCarlitz} we conclude
that an application of Theorem~\ref{thm:main} in this example shows
that the zero set of the function $z\mapsto(z;q)_{\infty}$ coincides
with the spectrum of $J$ and, at the same time, with the support
of the orthogonality measure for the Al-Salam\textendash Carlitz II
polynomials. This is actually the correct answer as the orthogonality
measure in this case is known to be supported on the set $\{q^{-n};\,n\in\mathbb{Z}_{+}\}$
\cite{AlSalamCarlitz,KoekoekLeskySwarttouw}.

\section*{Acknowledgments}

The author acknowledges gratefully partial support by the Ministry
of Education, Youth and Sports of the Czech Republic project no. CZ.02.1.01/0.0/0.0/16\_019/0000778.

\end{document}